\begin{document}
\theoremstyle{plain}
\newtheorem{thm}{Theorem}[section]
\newtheorem{theorem}[thm]{Theorem}
\newtheorem{addendum}[thm]{Addendum}
\newtheorem{lemma}[thm]{Lemma}
\newtheorem{corollary}[thm]{Corollary}
\newtheorem{proposition}[thm]{Proposition}

\newcommand{\mR}{\mathbb{R}}
\newcommand{\mZ}{\mathbb{Z}}
\newcommand{\mN}{\mathbb{N}}
\newcommand{\bC}{\mathbb{C}}
\newcommand{\mP}{\mathbb{P}}
\newcommand{\CO}{\mathcal{O}}
\newcommand{\CE}{\mathcal{E}}
\newcommand{\CF}{\mathcal{F}}
\newcommand{\CG}{\mathcal{G}}
\newcommand{\CL}{\mathcal{L}}
\newcommand{\CM}{\mathcal{M}}
\newcommand{\CP}{\mathcal{P}}
\newcommand{\CS}{\mathcal{S}}
\newcommand{\CA}{\mathcal{A}}
\newcommand{\CB}{\mathcal{B}}
\newcommand{\CC}{\mathcal{C}}
\newcommand{\CH}{\mathcal{H}}
\newcommand{\CI}{\mathcal{I}}
\newcommand{\CJ}{\mathcal{J}}
\newcommand{\CZ}{\mathcal{Z}}
\newcommand{\CN}{\mathcal{N}}
\newcommand{\CR}{\mathcal{R}}
\newcommand{\CT}{\mathcal{T}}

\newcommand{\FA}{\mathfrak{A}}
\newcommand{\FB}{\mathfrak{B}}
\newcommand{\FC}{\mathfrak{C}}
\newcommand{\FD}{\mathfrak{D}}
\newcommand{\FM}{\mathfrak{M}}
\newcommand{\FT}{\mathfrak{T}}

\newcommand{\SA}{\mathscr{A}}
\newcommand{\SB}{\mathscr{B}}
\newcommand{\SC}{\mathscr{C}}
\newcommand{\SD}{\mathscr{D}}

\newcommand{\mr}{\mbox}
\newcommand{\I}{{\bf I}}
\newcommand{\J}{{\bf J}}
\newcommand{\bs}{{\bf s}}
\newcommand{\ts}{{\tilde s}}
\newcommand{\e}{{{\bf e}}}
\newcommand{\m}{{\bf m}}
\newcommand{\ord}{\mathrm{ord}}
\newcommand{\fb}{\mathfrak{b}}

\def\calm{\mathcal M}
\def\cals{\mathcal S}
\def\calh{\mathcal H}
\def\cala{\mathcal A}
\def\calt{\mathcal T}
\def\lra{\longrightarrow}
\def\calth{\mathcal{TH}}

\theoremstyle{definition}
\newtheorem{notations}[thm]{Notations}
\newtheorem{definition}[thm]{Definition}
\newtheorem{claim}[thm]{Claim}
\newtheorem{assumption}[thm]{Assumption}
\newtheorem{assumptions}[thm]{Assumptions}
\newtheorem{property}[thm]{Property}
\newtheorem{properties}[thm]{Properties}
\newtheorem{example}[thm]{Example}
\newtheorem{examples}{Examples}
\newtheorem{conjecture}[thm]{Conjecture}
\newtheorem{questions}[thm]{Questions}
\newtheorem{question}[thm]{Question}
\newtheorem{problem}[thm]{Problem}

\theoremstyle{remark}
\newtheorem{remark}[thm]{Remark}
\newtheorem{remarks}[thm]{Remarks}

\numberwithin{equation}{section}
 \newcommand{\Rnm}[1]{\uppercase\expandafter{\romannumeral #1}}

\newcommand{\xyp}[1]{\begin{eqnarray*}
\xymatrix{#1}
\end{eqnarray*}} 

 \title{Slopes of fibrations with trivial vertical fundamental groups}
\author{Xiao-Lei Liu~~~~Xin Lu}

\footnotetext[1]{ \ The first author is supported by NSFC (No. 12271073); the second author is supported by National Natural Science Foundation of China,  Fundamental Research Funds for central Universities, and Science and Technology Commission of Shanghai Municipality (No. 22DZ2229014).}

\footnotetext[2]{ \ {\itshape 2010 Mathematics Subject
Classification.}  14D06, 14H10, 14H30 }

\footnotetext[3]{\ {\itshape Key words and phrases.}
Fundamental group, Kodaira fibration,  slope,   modular invariant.}

\date{}

 \maketitle

\begin{abstract}
	Kodaira fibrations  have  non-trivial vertical fundamental groups and their slopes are all 12.
	In this paper,  we  show that 12 is indeed the sharp upper bound for the slopes of fibrations with trivial vertical fundamental groups.
	Precisely, for each  $g\geq3$ we prove the existence of fibrations of genus $g$ with trivial vertical fundamental groups whose slopes can be arbitrarily close to 12.
	This gives a relative analogy of Roulleau-Urz\'ua's work \cite{RU15} on the slopes of surfaces of general type with trivial fundamental groups.
\end{abstract}


\section{Introduction}
We work over the complex number $\mathbb{C}$.
Let $X$ be a smooth minimal projective surface of general type.
Denote by $c_1^2(X),c_2(X)$ the two Chern numbers of $X$.
It is well-known that $c_1^2(X)=K_X^2$ is the self-intersection number of the canonical divisor $K_X$,
and that $c_2(X)=\chi_{\mathrm{top}}(X)$ is the Euler number of $X$.
Moreover, they satisfy the following Noether equality:
\begin{align}\label{eqnNoet}
  c_1^2(X)+c_2(X)=K_X^2+\chi_{\mathrm{top}}(X)=12\chi(\CO_X),
\end{align}
where $\chi(\CO_X)$ is the Euler characteristic of the structure sheaf $\CO_X$.
A fundamental problem in the study of the geography of surfaces of general type is:
for which pair of integers $(x,y)$,
there exists a smooth minimal projective surface $X$ of general type such that $\big(c_1^2(X),c_2(X)\big)=(x,y)$?
By \eqref{eqnNoet}, this is equivalent to describe all the possible values
$\big(K_X^2,\chi(\CO_X)\big)$.
There is a long history  to this problem and we refer to  \cite{Pe81,Pe87} and \cite[VII \S 8]{BHPV04}  for an introduction.
First, both $K_X^2$ and $\chi(\CO_X)$ are positive integers and satisfy the following restrictions (Noether's inequality and Miyaoka-Yau's inequality):
$$2\chi(\CO_X)-6\leq K_X^2 \leq 9\chi(\CO_X).$$
Sommese \cite{So84} proved that every rational number between $2$ and $9$ can be
realized as the quotient $K_X^2 /\chi(\CO_X)$
(will be called the slope of $X$ for convenience) of some surface $X$.

The situation becomes subtle when the simple connectedness condition is imposed on the surface $X$.
Persson \cite{Pe81} constructed a series of examples showing that all the rational numbers between $2$ and $8$ occur as the slopes of simply connected surfaces. According to the characterization of the Miyaoka-Yau equality,
any surface of general type with $K_X^2=9\chi(\CO_X)$ is a ball quotient and hence can not be simply connected.
Simply connected surfaces of general type with nonnegative index (i.e., its slope $\geq 8$)
seemed difficult to construct.
In fact, the Watershed conjecture predicted that any simply connected surface of general type has negative index (i.e., its slope $<8$).
Nevertheless, Moishezon-Teicher \cite{mt87} constructed the first example
of simply connected surfaces with positive index,
and Chen \cite{ch87} found simply connected surfaces with slopes up to $8.75$.
More recently in the beautiful work \cite{RU15}, Roulleau-Urz\'ua proved that the slopes of simply connected surfaces are dense in $[8,9)$.
In particular, the slopes of
simply connected surfaces of general type can be arbitrarily close to $9$.

\vspace{1mm}
We are interested in the analogy of the geography problem for surface fibrations.
Let $f\colon  X \to C$ be a relatively minimal surface fibration whose general fiber is of genus $g\geq 2$.
We consider the following relative invariants:
\begin{align*}
	\begin{cases}
		K_f^2=K_X^2-8(g-1)(g(C)-1),\\
		\chi_f=\chi(\mathcal O_X)-(g-1)(g(C)-1),\\
		e_f=\chi_{\mathrm{top}}(X)-4(g-1)(g(C)-1).
	\end{cases}
\end{align*} 
These invariants are non-negative integers satisfying the following properties:\vspace{-1mm}
	\begin{enumerate}
	\item[(1).]  $K_f^2=0 ~\Longleftrightarrow ~\chi_f=0$, if and only if $f$ is locally trivial.
	\item[(2).] $e_f=0$ if and only if $f$ is smooth.
	\item[(3).] The Noether equality holds: $12\chi_f=K_f^2+e_f$.
\end{enumerate}

\vspace{-1mm}
Analogously, one may ask a similar geography problem for surface fibrations (cf. \cite[\S\,1.1]{AK02}):
{\it for which pair of integers $(x,y)$,
there exists a relatively minimal surface fibration $f\colon X \to C$
such that $\big(K_f^2,\chi_f\big)=(x,y)$}?
For a locally non-trivial fibration $f$, the
{\it slope} of $f$ is defined as $\lambda_f=K_f^2/\chi_f$,
and it satisfies:
$$\frac{4(g-1)}{g} \leq \lambda_f \leq 12 .$$
The first inequality is the slope inequality \cite{Xi87,CH88},
and the equality can be reached by hyperelliptic fibrations;
the second inequality follows from the non-negativity of $e_f$,
and the equality can be reached by Kodaira fibrations whenever $g\geq 3$.

Xiao started to consider the above geography problem for surface fibrations around 1980s,
where he got partial answers for fibrations of genus $g=2$, cf. \cite[Theorem 2.9]{Xi85} and \cite[Theorem 4.3.5]{Xi92}.
Motivated by Xiao's work, Chen \cite{ch87} generalized it to hyperelliptic fibrations,
based on which Chen constructed many simply connected surfaces with positive index as mentioned before.
Recently, Liu-Lu \cite{LL} proved that any rational number $r\in \left[4(g-1)/g,\,12\right]$ can be realized as the slope of a  fibration of genus $g$ whenever $g>3$,
which gives an analogy of Sommese's result \cite{So84} in the relative version.

We would like to take the fundamental group into consideration.
Recall that the fundamental group of the fibered surface $X$ can be divided into two parts with the following exact sequence (cf. \cite{Xi91}):
$$1 \lra \mathcal{V}_f \lra \pi_1(X) \longrightarrow \mathcal{H}_f \longrightarrow 1,$$
where $\mathcal{V}_f$ is called the {\it vertical fundamental group} of the fibration $f$, see Section \ref{fundgp} for a more precise definition.
Xiao \cite{Xi87} proved that the vertical fundamental group $\mathcal{V}_f$ is trivial if $f$ is non-hyperelliptic with $\lambda_f<4$.
It is not difficult \cite{Xi91} to construct surface fibrations with lower slope admitting a trivial vertical fundamental group.
The situation is different if the slope $\lambda_f$ is large.
In the extreme case when $\lambda_f=12$, the vertical fundamental group $\mathcal{V}_f$ can never be trivial, cf. Lemma \ref{lem-2-1}.
Hence it is natural to wonder: is $12$  the sharp upper bound for
the slopes of fibrations with trivial vertical fundamental groups?
Our main purpose is to answer  this question  affirmatively,
which gives a relative analogy of Roulleau-Urz\'ua's work \cite{RU15} on the slopes of surfaces of general type with trivial fundamental groups.

 \begin{theorem}\label{thm1}
	For each $g\geq3$, there exists a sequence of fibrations $f_n\colon  X_n \to C_n$ of genus $g$
	such that $\mathcal{V}_{f_n}=\{1\}$ and $\lim\limits_{n\to\infty}\lambda_{f_n}=12.$
\end{theorem}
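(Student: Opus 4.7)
The plan is to realize Theorem \ref{thm1} as a relative analog of the Roulleau-Urz\'ua construction: produce, for each $g\ge 3$, a sequence of fibrations with a bounded (in $n$) amount of singularity in the fibers (carefully chosen Lefschetz-type degenerations that force the vertical fundamental group to vanish), while letting $\chi_{f_n}\to\infty$, so that the slope $\lambda_{f_n}=12-e_{f_n}/\chi_{f_n}$ approaches $12$.

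\textbf{Step 1 (seed Kodaira fibration).} For fixed $g\ge 3$, start from a Kodaira fibration $k\colon K\to B$ of fiber genus $g$, which exists by the Atiyah-Kodaira construction. Since $k$ is smooth, $e_k=0$ and $\lambda_k=12$, but by Lemma \ref{lem-2-1} its vertical fundamental group $\mathcal V_k$ is non-trivial — this is the obstruction we must break.

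\textbf{Step 2 (inflate $\chi_f$).} Pull back $k$ along an \'etale cover $B_n\to B$ of degree $n$ to obtain Kodaira fibrations $k_n\colon K_n\to B_n$ with $\chi_{k_n}=n\chi_k$, $K_{k_n}^2=nK_k^2$, and $e_{k_n}=0$. The slope is still $12$, and $\chi_{k_n}\to\infty$, but $\mathcal V_{k_n}$ is still non-trivial.

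\textbf{Step 3 (controlled Lefschetz degeneration).} For each $n$, replace $k_n$ by a relatively minimal fibration $f_n\colon X_n\to C_n$ of genus $g$ obtained by modifying the classifying map $C_n\to\overline{\mathcal M}_g$ only in a small neighborhood of a fixed finite set of points, where it is made to cross the boundary $\partial\overline{\mathcal M}_g$ transversely along prescribed vanishing cycles $\gamma_1,\dots,\gamma_r$ chosen to generate $\pi_1(F)$ of a smooth fiber (at most $r\le 2g$ simple closed curves suffice, e.g.\ a standard symplectic basis). Concretely, one uses a birational surgery (local semistable reduction after a small deformation of the universal family) to introduce exactly these Lefschetz nodal fibers. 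The invariants of $f_n$ satisfy
\begin{align*}
\chi_{f_n}=\chi_{k_n}+O(1),\qquad K_{f_n}^2=K_{k_n}^2+O(1),\qquad e_{f_n}=O(1),
\end{align*}
where all $O(1)$ terms depend only on $g$ and the fixed number $r$ of vanishing cycles, independent of $n$.

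\textbf{Step 4 (slope and triviality of $\mathcal V_{f_n}$).} Combining the estimates,
\[
\lambda_{f_n}=\frac{K_{f_n}^2}{\chi_{f_n}}=\frac{nK_k^2+O(1)}{n\chi_k+O(1)}\longrightarrow 12\quad\text{as }n\to\infty.
\]
For the vertical fundamental group, each vanishing cycle $\gamma_i$ bounds a Lefschetz thimble (a small disk) in $X_n$, hence its class in $\pi_1(X_n)$ is trivial. Since $\{\gamma_1,\dots,\gamma_r\}$ was chosen to generate $\pi_1(F)$, the natural map $\pi_1(F)\to\pi_1(X_n)$ is identically trivial, giving $\mathcal V_{f_n}=\{1\}$.

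The main obstacle is \textbf{Step 3}: executing the Lefschetz degeneration purely algebraically while controlling the invariants as stated. This is the relative counterpart of the combinatorial Chern-class computation for Hirzebruch-Kummer covers in \cite{RU15}: the topological recipe is transparent, but realizing the surgery inside a smooth projective surface requires a careful birational argument — most likely via a versal deformation of $k_n$ at finitely many chosen points of $B_n$, followed by minimal model reduction to absorb any exceptional curves produced along the way, and a verification that this reduction changes $\chi_f$ and $K_f^2$ only by bounded amounts.
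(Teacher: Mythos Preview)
Your Step 3 is indeed the gap, and you have not closed it. ``Modifying the classifying map $C_n\to\overline{\mathcal M}_g$ only in a small neighborhood of a fixed finite set of points'' is a symplectic-topology move, not an algebraic one: a morphism between projective varieties cannot be edited locally. A versal deformation of the family $k_n$ deforms the whole object and gives no control over where (or whether) the deformed family meets $\Delta_0$, let alone over which vanishing cycles appear; there is no mechanism forcing them to generate $\pi_1(F)$. Your $O(1)$ estimates on the change in $\chi_f$ and $K_f^2$ likewise hinge on the surgery being local, so they inherit the same defect. Without a rigorous Step 3 nothing in the sketch produces an algebraic fibration with both $\mathcal V_f=\{1\}$ and slope near $12$.

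The paper avoids this obstacle by reversing the order of operations. Rather than starting from a Kodaira fibration and inserting singular fibers, it \emph{starts} with a fibration $f_0\colon X_0\to C$ whose singular fibers are all isomorphic to the simply connected hyperelliptic fiber $F_g^h$ of Example \ref{exa:FhFg} (so $\mathcal V_{f_0}=\{1\}$ directly by Lemma \ref{lem-2-2}, with no need to track individual vanishing cycles), and whose double cover branched at the singular values is a Kodaira fibration $h$. This $f_0$ is built concretely: cut a complete curve $C\subset j(\mathcal M_{g,[l]})$ inside $\overline{\mathcal A_{g,[l]}}$ meeting the hyperelliptic image transversely, pull back through the $2{:}1$ Torelli map to $B\subset\mathcal M_{g,[l]}$, restrict the universal family to get $h\colon X\to B$, and take the quotient by the involution of Lemma \ref{existprop}. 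Then, instead of \'etale covers, one takes cyclic base changes $C_n\to C$ of \emph{odd} degree $2n+1$ \emph{totally ramified} over the singular values: odd degree guarantees the root model of $F_g^h$ is again $F_g^h$, so each $f_n$ still has simply connected fibers, while the modular invariants $\kappa,\lambda$ scale by $2n+1$ and the local Chern contributions of the $s$ fibers $F_g^h$ stay fixed. The slope is then computed exactly as $\lambda_{f_n}=\big((2n+1)\kappa(f_0)+(2g-2)s\big)\big/\big((2n+1)\lambda(f_0)+\tfrac{g}{2}s\big)\to \kappa(f_0)/\lambda(f_0)=12$.
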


Different from the constructions (using cyclic coverings) in \cite{ch87,LL},
our construction is based on the moduli spaces and the Torelli map.
As a byproduct of our method, we can construct more examples with given slopes in the case of $g=3$.
 \begin{theorem}\label{thm2}
	For each rational number $r\in [\frac83,12)$, there exists a fibration $f$ of genus $3$ such that $\mathcal{V}_f=\{1\}$ and $\lambda_f=r$.
\end{theorem}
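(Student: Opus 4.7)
My plan is to combine the high-slope fibrations produced in Theorem \ref{thm1} with a ``low-slope'' hyperelliptic anchor of slope exactly $8/3$, and to interpolate between them by a fiber-sum type operation that preserves the vanishing of $\mathcal{V}_f$.

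From the proof of Theorem \ref{thm1} with $g=3$, I would first extract a family of fibrations $f_n\colon X_n\to C_n$ of genus $3$ with $\mathcal{V}_{f_n}=\{1\}$ and $\lambda_{f_n}\to 12$; for $g=3$ the birationality of the Torelli map $M_3\dashrightarrow A_3$ and the dimension coincidence $\dim M_3=\dim A_3=6$ give enough freedom to tune the numerical invariants $(K_{f_n}^2,\chi_{f_n})$. Separately, I would construct a hyperelliptic fibration $h\colon Y\to D$ of genus $3$ saturating the slope inequality ($\lambda_h=8/3$) and having $\mathcal{V}_h=\{1\}$. A natural candidate is a suitable double cover of a ruled surface branched along a sufficiently generic divisor, arranged so that the monodromy of $h$ kills the image of the fiber $\pi_1$ inside $\pi_1(Y)$.

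Next, I would set up a fiber-sum operation on relatively minimal genus $3$ fibrations over a common base curve (after suitable base changes) that adds the invariants $K_f^2,\chi_f$ and preserves $\mathcal{V}_f=\{1\}$. The latter property should be essentially formal: since the gluing takes place along smooth fibers where $\pi_1(F)\to\pi_1$ of the total space is already trivial on both factors, a van Kampen argument combined with the definition of $\mathcal{V}_f$ from Section \ref{fundgp} forces the resulting fibration to also have trivial vertical fundamental group. Given any rational $r\in(8/3,12)$, pick $n$ large enough that $\lambda_{f_n}>r$ and write $(a_1,b_1)=(K_h^2,\chi_h)$, $(a_2,b_2)=(K_{f_n}^2,\chi_{f_n})$. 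Solve
\[ r=\frac{k_1a_1+k_2a_2}{k_1b_1+k_2b_2} \]
for positive integers $k_1,k_2$, which is possible precisely because $a_1/b_1=8/3<r<a_2/b_2$. The fiber sum of $k_1$ copies of $h$ with $k_2$ copies of $f_n$ then has invariants $(k_1a_1+k_2a_2,\,k_1b_1+k_2b_2)$ and slope exactly $r$, while the endpoint $r=8/3$ is realized by $h$ itself.

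The main obstacle is the construction of the hyperelliptic anchor $h$: producing a genus $3$ hyperelliptic fibration with slope exactly $8/3$ \emph{and} with trivial vertical fundamental group is delicate, since the rigid hyperelliptic structure tends to constrain the monodromy, and one must choose the branch locus so that $\pi_1(F)$ is killed in $\pi_1(Y)$ without spoiling the equality $\lambda_h=8/3$. A secondary difficulty will be a clean verification that the fiber-sum construction genuinely adds $K_f^2$ and $\chi_f$, preserves relative minimality, and propagates $\mathcal{V}_f=\{1\}$; depending on the chosen gluing, this may require passing to ramified base changes and controlling the contribution of the ramification to both the Chern numbers and the monodromy.
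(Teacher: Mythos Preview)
Your plan has a genuine gap at its central step: the ``fiber-sum type operation'' you invoke does not exist in the algebraic category. Fiber sums of Lefschetz fibrations are a symplectic/smooth construction; gluing two holomorphic fibrations along a smooth fiber produces a symplectic $4$-manifold, but in general not a projective surface, so there is no reason the result is a relatively minimal algebraic fibration whose $K_f^2$ and $\chi_f$ are the sums of the pieces. The van Kampen argument you sketch for $\mathcal{V}_f$ would be fine topologically, but it is beside the point if the output is not algebraic. Your secondary worry is also real: a genus~$3$ hyperelliptic fibration with $\lambda_h$ exactly $8/3$ and $\mathcal{V}_h=\{1\}$ is not something you can just write down, and the known constructions (Remark~\ref{remark}) only give slopes strictly above $4(g-1)/g$.

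The paper proceeds entirely differently and never leaves the algebraic world. It works with the single fibration $f_0\colon X_0\to C$ already built in the proof of Theorem~\ref{thm1} for $g=3$, whose singular fibers are all copies of $F_3^h$. Using the Harris--Mumford relations on $\overline{\mathcal M_3}$,
\[
\kappa=\tfrac13\delta_0+3\delta_1+\tfrac43\mathfrak h,\qquad \lambda=\tfrac19\delta_0+\tfrac13\delta_1+\tfrac19\mathfrak h,
\]
one computes $\kappa(f_0)=\tfrac{2}{3}s$ and $\lambda(f_0)=\tfrac{1}{18}s$ exactly, so $\lambda_{f_0}=3$. The interpolation is then done by \emph{cyclic base changes} of odd degree $m$ branched over a chosen subset of the singular values: an odd-degree totally ramified pullback leaves each $F_3^h$ unchanged, so the number of singular fibers becomes $ds+m(s-ds)$ while $\kappa$ and $\lambda$ scale by $m$, and one solves for the proportion $d$ to hit any prescribed $r\in[3,12)$. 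The range $[8/3,3)$ is then covered by the hyperelliptic examples of Remark~\ref{remark}. The point is that base change of a fixed algebraic fibration replaces your non-existent fiber sum: it stays algebraic, it adds invariants in a controlled way via \eqref{timesd} and \eqref{modinv}, and the simply connected fiber $F_3^h$ survives, giving $\mathcal{V}_f=\{1\}$ for free.
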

We tend to believe this should be true for each $g>3$; see Remark \ref{rmk-gen}.
Combining with the results in \cite{LL}, we get the following.
\begin{corollary}
	For each $g\geq2$ and each rational number $r\in [4(g-1)/g, \lambda_M(g)]$, there exists a fibration of genus $g$ with slope $r$, where
	$$
	\lambda_M(g):=
	\begin{cases}
		7, & \mbox{if~} g=2,\\
		12, & \mbox{if~} g\geq 3.
	\end{cases}
	$$
\end{corollary}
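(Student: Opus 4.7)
The plan is to split by genus and assemble results that are either cited in the introduction or established as Theorem \ref{thm1}--Theorem \ref{thm2} above. For $g=2$, the target interval is $\bigl[4(g-1)/g,\,\lambda_M(g)\bigr]=[2,7]$; every rational in this range is realized as the slope of some relatively minimal genus-$2$ fibration by combining Xiao's work \cite{Xi85,Xi92} (where the case $g=2$ is treated directly) with Chen's \cite{ch87} extension to hyperelliptic fibrations, which suffices because every genus-$2$ curve is hyperelliptic.

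For $g\geq 4$, I would invoke Liu-Lu \cite{LL}, which already asserts that every rational $r\in\bigl[4(g-1)/g,\,12\bigr]$ is realized as the slope of a fibration of genus $g$. The extremal value $r=12$ presents no issue, since Kodaira fibrations exist in every genus $g\geq 3$ and saturate the bound $\lambda_f\leq 12$ recalled above (the inequality coming from $e_f\geq 0$).

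For the remaining case $g=3$, Theorem \ref{thm2} of this paper produces, for every rational $r\in\bigl[8/3,\,12\bigr)$, a genus-$3$ fibration of slope $r$ (in fact with trivial vertical fundamental group, though this stronger conclusion is not needed here). To finish the closed interval $\bigl[8/3,\,12\bigr]$, I would adjoin the single extremal value $r=12$, again realized by a genus-$3$ Kodaira fibration. Taking the union of the three genus ranges yields the statement.

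There is no substantive obstacle for the corollary itself: it is essentially a bookkeeping synthesis of the three cases, with the endpoint $r=12$ contributed in each of the cases $g\geq 3$ by Kodaira fibrations. The genuine difficulty is sequestered in Theorem \ref{thm2} (and, separately, Theorem \ref{thm1}), whose proofs rest on the moduli-space and Torelli-map construction advertised just after the statement of Theorem \ref{thm1}; that is where the real work — producing slopes arbitrarily close to $12$ with controllable vertical fundamental group — actually sits.
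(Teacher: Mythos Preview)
Your decomposition by genus and the handling of $g\geq 3$ match the paper's approach exactly: the corollary is presented in the paper without proof, merely as ``combining with the results in \cite{LL}'', and your treatment of $g=3$ via Theorem~\ref{thm2} together with a Kodaira fibration for the endpoint $r=12$, and of $g\geq 4$ via \cite{LL} directly, is precisely what is intended.

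The one place where your proposal diverges is the $g=2$ case. You invoke Xiao \cite{Xi85,Xi92} and Chen \cite{ch87}, but the paper itself describes Xiao's $g=2$ results as only ``partial answers'' to the geography problem, and Chen's paper is cited for constructing simply connected surfaces of positive index rather than for exhausting the slope interval. The reference the paper actually relies on for $g=2$ is again \cite{LL}: as noted in Remark~(2) following the corollary, the hyperelliptic constructions of \cite{LL} realize every rational $r\in\big(4(g-1)/g,\,\lambda_M^h(g)\big)$ for all $g\geq 2$, and since $\lambda_M^h(2)=7$ this covers the open interval $(2,7)$; the endpoints are classical (equality in the slope inequality for $r=2$, and \cite{Mo98,LT13} for $r=7$). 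So the fix is simply to replace your Xiao--Chen citation for $g=2$ with \cite{LL} plus the endpoint references already recorded in the introduction. Otherwise the proposal is fine and aligns with the paper.
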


\begin{remark}
(1). If $f$ is hyperelliptic, it is known \cite{Xi92} that $\lambda_f\leq\lambda_M^h(g)$
and the upper bounds can be reached (see \cite{Mo98,LT13}),
where
$$
\lambda_M^h(g):=
\begin{cases}
	12-\frac{8g+4}{g^2}, & \mbox{if~} g \mbox{~is~even,}\\
	12-\frac{8g+4}{g^2-1}, & \mbox{if~} g \mbox{~is~odd.}
\end{cases}
$$
When $g=2$, the fibration $f$ is always hyperelliptic, and hence $\lambda_f\leq 7$ in this case.
It is well-known that there exist Kodaira fibrations of genus $g$ for every $g\geq 3$.
Hence $\lambda_M(g)$ is indeed the sharp upper bound of the slopes of fibrations of genus $g$.

(2). The hyperelliptic fibrations constructed in \cite{LL} are in fact with trivial vertical fundamental group.
In other words, these examples show that for each genus $g\geq2$ and each rational number
$r\in \big(4(g-1)/g, \lambda_M^h(g)\big)$, there exists a fibration $f$ of genus $g$ satisfying that $\mathcal{V}_f=\{1\}$ and $\lambda_f=r$, see Remark \ref{remark}.
However, the vertical fundamental groups of non-hyperelliptic fibrations
constructed in \cite{LL} seem far from being trivial.
\end{remark}

\section{Preliminaries}\label{sec-pre}
\subsection{Moduli spaces and the Torelli map}
Let $\CM_g$ (resp. $\CH_g$) be the moduli space of smooth projective curves (resp. hyperelliptic curves) of genus $g\geq 2$, and $\CA_g$ be the moduli space of $g$-dimensional principally polarized abelian varieties. Recall that the Torelli morphism
$$j\colon \CM_g\lra\CA_g,$$
which associates to a curve its Jacobian with the canonical principal polarization.
The Torelli morphism $j$ is injective; in fact it is an immersion \cite{OS79}.
Let $\overline{\CM_g}=\overline{\CM_g}^{DM}$ be the Deligne-Mumford compactification of $\calm_g$,
and $\overline{\cala_g}=\overline{\cala_g}^{S}$ be the Satake compactification of $\cala_g$.
The boundary $\overline{\CM_g}\setminus \calm_g$ consists of several divisors $\Delta_i$ ($0\leq i\leq [g/2]$);
while the boundary $\overline{\cala_g}\setminus \cala_g$ is of codimension at least two.
In fact,
$$\overline{\cala_g}\setminus \cala_g=\bigcup\limits_{0\leq i \leq g-1} \cala_i.$$
It turns out that the Torelli map $j$ extends to a map from $\overline{\calm_g}$ to $\overline{\cala_g}$,
which is still denoted by $j$.
The extended Torelli map is no longer injective:
it collapses the boundary divisors.
When $g\geq 3$, the boundary $j(\overline{\calm_g}) \setminus j(\calm_g)$ is of codimension at least two,
cf. \cite{dm18}.

In order to assure the representability, it is necessary to take certain level structure into consideration.
Fixing $l\geq 3$ an integer, let  $\calm_{g,[l]}$ (resp. $\cala_{g,[l]}$ and so on) be the corresponding moduli space with full level-$l$ structure.
No specific choice of the level $l(\geq3)$ is made because it is only imposed to assure the representability,
which plays no essential role in our study.
The Torelli map $j$ can be similarly defined over the moduli spaces with level structure.
However, the Torelli map
$$j\colon  \calm_{g,[l]} \lra \cala_{g,[l]}$$
is no longer an immersion, but a two-to-one map ramified exactly over the locus of hyperelliptic curves $\calh_{g,[l]} \subseteq \calm_{g,[l]}$, cf. \cite{OS79}.
As the level $l\geq 3$, there is a universal family of smooth curves (resp. stable curves)
over $\calm_g$ (resp. $\overline{\calm_g}$) with the following commutative diagram, cf. \cite[Theorem\,10.9]{po77}.
$$\xymatrix{\mathcal{S}_{g,[l]} \ar@{^(->}[rr] \ar[d]^-{\mathfrak{f}}
	&&  \overline{\mathcal{S}_{g,[l]}}\ar[d]^-{\bar{\mathfrak{f}}}\\
	\calm_{g,[l]} \ar@{^(->}[rr] && \overline{\calm_{g,[l]}}
}$$

The following lemma can be found in \cite[Lemma\,A.1]{LZ19}.

\begin{lemma}\label{existprop}
	There exists an involution $\sigma_g$ {\rm(}resp. $\tau_g${\rm)}
	on $\cals_{g,[l]}$ {\rm(}resp. $\calm_{g,[l]})$
	such that the following diagram commutes.
	$$\xymatrix{
		\cals_{g,[l]} \ar[d]_{\mathfrak f}\ar[rr]^{\sigma_g} && \cals_{g,[l]} \ar[d]^{\mathfrak f}\\
		\calm_{g,[l]} \ar[rr]^{\tau_g}  && \calm_{g,[l]}
	}
	$$

\noindent	Moreover, $j\circ \tau_g(x)=j(x)$ for any $x\in \calm_{g,[l]}$,
the fixed locus of $\tau_g$ is exactly the hyperelliptic locus $\calh_{g,[l]}\subseteq \calm_{g,[l]}$,
	and for $p\in \calh_{g,[l]}$, $\sigma_g|_{F_p}\colon F_p \to F_p$ is the hyperelliptic involution of $F_p$,
	where $F_p \subseteq \cals_{g,[l]}$ is the hyperelliptic curve over $p$.
\end{lemma}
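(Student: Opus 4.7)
The plan is to argue functorially, exploiting that for $l\geq 3$ both $\mathcal{M}_{g,[l]}$ and $\mathcal{S}_{g,[l]}$ are fine moduli spaces (level $\geq 3$ kills all curve automorphisms preserving a level structure), together with the observation that the automorphism $-1$ of any principally polarized abelian variety acts non-trivially on its level-$l$ structure.

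I would define $\tau_g$ via Yoneda by the functor transformation $\tau_g\colon (C,\beta)\mapsto (C,-\beta)$, where $\beta$ is a level-$l$ structure on $H_1(C,\mathbb{Z}/l)=H_1(J(C),\mathbb{Z}/l)$ and $-\beta$ is obtained by applying $-1\in\mathrm{Aut}(J(C),\Theta_C)$. Analogously, I would define $\sigma_g$ on the universal family by $(C,\beta,x)\mapsto (C,-\beta,x)$. Because $(-1)^2=\mathrm{id}$, both $\tau_g$ and $\sigma_g$ are involutions. The equality $\mathfrak{f}\circ\sigma_g=\tau_g\circ\mathfrak{f}$ is built into the definition (neither map touches the marked point), and $j\circ\tau_g=j$ is immediate because $-1$ is an automorphism of the polarized Jacobian, so $(J(C),\Theta_C,-\beta)\cong (J(C),\Theta_C,\beta)$ in $\mathcal{A}_{g,[l]}$.

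The key step is the identification of the fixed locus. By construction $(C,\beta)$ is fixed by $\tau_g$ exactly when some $\phi\in\mathrm{Aut}(C)$ induces $\phi_{*}=-1$ on $H_1(C,\mathbb{Z}/l)$. Here I invoke the refined Torelli theorem describing the inclusion $\mathrm{Aut}(C)\hookrightarrow\mathrm{Aut}(J(C),\Theta_C)$: its image has index two when $C$ is non-hyperelliptic (the extra coset being $(-1)\cdot\mathrm{Aut}(C)$), while for hyperelliptic $C$ the hyperelliptic involution $\iota$ already realizes $\iota_{*}=-1$, so the inclusion is an equality. Consequently such a $\phi$ exists iff $C$ is hyperelliptic, and then $\phi=\iota$. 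This simultaneously shows that the fixed locus of $\tau_g$ is exactly $\mathcal{H}_{g,[l]}$, and, after unwinding the identification $(C,-\beta,x)\sim(C,\beta,\iota(x))$ on a hyperelliptic fiber $F_p$, that $\sigma_g|_{F_p}$ coincides with the hyperelliptic involution of $F_p$.

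The only genuinely subtle point is the passage from the functorial prescription to an actual morphism of the fine moduli schemes; for $l\geq 3$ this is formal, since the rigidity of the level structure makes the moduli problems representable and the universal families canonical. Everything else amounts to bookkeeping on the universal family, with the refined Torelli theorem doing all the work in the fixed-locus computation.
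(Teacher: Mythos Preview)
The paper does not give its own proof of this lemma: it simply cites \cite[Lemma\,A.1]{LZ19}. Your proposal therefore supplies what the paper omits, and the argument you sketch is essentially correct and is in fact the standard one.

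One small point worth tightening: you deduce that $(C,\beta)$ is $\tau_g$-fixed iff some $\phi\in\mathrm{Aut}(C)$ acts as $-1$ on $H_1(C,\mathbb{Z}/l)$, and then invoke the refined Torelli theorem, which concerns whether $-1\in\mathrm{Aut}(J(C),\Theta_C)$ lies in the image of $\mathrm{Aut}(C)$. To bridge these you need that $\phi_*=-1$ on the $l$-torsion forces $\phi_*=-1$ on all of $J(C)$. This follows from the very rigidity you mention at the outset: since $-1$ already induces $-1$ on $l$-torsion, $\phi_*\cdot(-1)^{-1}$ acts trivially mod $l$, hence equals the identity in $\mathrm{Aut}(J(C),\Theta_C)$ for $l\geq3$. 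With this step made explicit, your identification of the fixed locus with $\mathcal{H}_{g,[l]}$ and of $\sigma_g|_{F_p}$ with the hyperelliptic involution goes through exactly as you describe.
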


\subsection{Modular invariants}\label{sec-modular}

Let $f\colon X\to C$ be a surface fibration (or simply fibraion),
i.e., $f$ is a proper surjective morphism from a smooth projective surface onto a smooth projective curve with connected fibers.
Denote by $g$ the genus of a general fiber of $f$.
We will always assume that $f$ is relatively minimal,
i.e., there is no $(-1)$-curve contained in fibers of $f$.
If $\pi\colon \tilde C \to C$ is a base change of degree $d$, then the {\it pullback fibration} $\tilde f\colon \tilde X\to \tilde C$ of $f$ with respect to $\pi$ is defined as the relatively minimal  model  of the desingularization of $X\times_C\tilde C\to \tilde C$.
Denote by $F$ the singular fiber of $f$ over
$p=f(F)\in C$. If $\pi$ is totally ramified over $p$, then the fiber $\tilde F$ of the pullback fibration $\tilde{f}$ over $\pi^{-1}(p)$  is called {\it $d$-th root model} of $F$. If $\pi$ is ramified over $p$ and some non-critical points of $f$ such that the fibers of $\tilde f$ over $\pi^{-1}(p)$ are all semistable, then the {\it Chern numbers} of $F$ are defined as follows (see \cite{Ta94,Ta96}),
\begin{equation*}
c_1^2(F)=K_f^2-\frac1dK_{\tilde{f}}^2,~~c_2(F)=e_f-\frac1de_{\tilde{f}},~~\chi_F=\chi_f-\frac1d\chi_{\tilde{f}}.
\end{equation*}

The fibration $f$ induces a moduli map $J\colon C\to \overline{\mathcal
M_g}$ from $C$ to the moduli space $\overline{\mathcal
M_g}$. Let $\lambda$ be the Hodge divisor class of $\overline{\mathcal M_g}$, $\delta$ be the boundary divisor class, and  $\kappa
=12\lambda-\delta$ be the first Morita-Mumford class. Then there are three fundamental {\it modular invariants} of $f$
 defined as follows (see \cite{Ta10}),
$$
\kappa(f)=\deg J^*\kappa , \hskip0.3cm \lambda(f)=\deg J^*\lambda ,
\hskip0.3cm \delta(f)=\deg J^*\delta.
$$
If $f$ is semistable, then
\begin{equation}\label{eqss}
\kappa(f)=K_{{f}}^2,~~\delta(f)=e_{{f}}, ~~ \lambda(f)=\chi_{{f}}.
\end{equation}
These modular invariants satisfy the {\it base change property}, i.e., if $\tilde f$ is the pullback
fibration  of $f$ with respect to a base change of degree $d$,  then
\begin{equation}\label{timesd}
\kappa(\tilde f)=d\cdot \kappa(f),~~\delta(\tilde f)=d\cdot
\delta(f), ~~ \lambda(\tilde f)=d\cdot \lambda(f).
\end{equation}
 It is proved in \cite{Ta94, Ta96} that
\begin{equation}\label{modinv}
\begin{cases}
K_f^2=\kappa(f)+\sum_{i=1}^sc_1^2(F_i),&\\
e_f=\delta(f)+\sum_{i=1}^sc_2(F_i),&\\
\chi_f=\lambda(f)+\sum_{i=1}^s\chi_{F_i},
\end{cases}
\end{equation}
where  $F_1,\cdots,F_s$ are all the singular fibers of $f$.

\begin{example}\label{exa:FhFg}
	Let $F^h_g$  be the singular hyperelliptic fiber of genus $g$ with the following dual graph, where  $\overset{n}{\underset{-e}{\circ}}$
denotes a smooth rational curve with self-intersection number $(-e)$
and  multiplicity $n$ in $F^h_g$. For convenience, we omit the subscript $(-e)$  whenever  $e=2$.
		{\upshape
		\begin{center}
			\begin{tikzpicture}
				[  place/.style={circle,draw,inner sep=0.5mm},
				place2/.style={circle,draw,fill,inner sep=0.2mm},]
				
				\node[place] (v1) at (0,1.5) [label=above:2] [label=left:{$C_0$}]  {};
				\node[place] (v2)  at (2,0) [label=above:1]  {};
				\node[place] (v3) at (2,2)  [label=above:1]  {};
				\node[place] (v4) at (2,3)  [label=above:1]  {};
				
				\node[place2]   at (2,1) {};
				\node[place2]   at (2,0.7) {};
				\node[place2] at (2,1.4)    {};
				\node[place2]  at (2,1.7)    {};
				
				\node[right=2pt] at (v2)  { ${C_{2g+2}}$};
				\node[right=2pt] at (v3)   { ${C_2}$};
				\node[right=2pt] at (v4)   {${C_1}$};
				\node[below=7pt] at (v1)   {\scriptsize $-(g+1)$};
				
				\draw (v1)--(v2); \draw (v1)--(v3); \draw (v1)--(v4);
				\node at (0.5,0) [below=8pt] {Figure 1: Hyperelliptic fiber $F^h_g$.};
			\end{tikzpicture}
	\end{center}}
	\vspace{-4mm}
\noindent
Then $F^h_g$ is simply connected \cite[Lemma A]{Pe81},
and the Chern numbers of $F^h_g$ are as follows (see also \cite[Example 2.4]{LL})
	\begin{align}\label{eqnchern}
		c_1^2(F^h_g)=2g-2, ~~\chi_{F^h_g}=\frac{g}2.
	\end{align}

 Let $\tilde F^h_g$ be the $d$-th root model of $F^h_g$. Since
 $$F_g^h=\sum_{i=0}^{2g+2}n_iC_i=2C_0+C_1+\cdots+C_{2g+2}$$
  is normal crossing, by \cite[Section 2.2]{LuT13}, the multiplicity of the strict transform of $C_i$ in $\tilde F^h_g$ is $n_i/\gcd(n_i,d)$ and $\tilde F^h_g=F^h_g$ when $d$ is odd.
\end{example}

\subsection{The fundamental group}\label{fundgp}
In this subsection,
we recall some general facts about the fundamental group of a surface fibration.
Let $f\colon  X \to C$ be a fibration of genus $g$, and $F$ be any general fiber.
According to \cite[Lemma 1]{Xi91}, the embedding of $F$ in $X$ induces a homomorphism
$$\alpha\colon  \pi_1(F) \lra \pi_1(X),$$
whose image is a normal subgroup of $\pi_1(X)$ and independent of the choice of the general fiber $F$.
The image of $\alpha$, denoted by $\mathcal{V}_f$, is called {\it the vertical fundamental group} of $X$.
Let $\mathcal{H}_f:=\pi_1(X)/\mathcal{V}_f$ be the quotient group,
then one obtains the following exact sequence:
$$1 \lra \mathcal{V}_f \lra \pi_1(X) \longrightarrow \mathcal{H}_f \longrightarrow 1.$$
The next property is more or less well-known.
\begin{lemma}\label{lem-2-1}
	Suppose that $f$ is a Kodaira fibration of genus $g\geq 3$,
	then the vertical fundamental group $\mathcal{V}_f$ is non-trivial.
\end{lemma}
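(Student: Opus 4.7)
My strategy is to combine Ehresmann's fibration theorem with the long exact sequence of homotopy groups of a fiber bundle. Since a Kodaira fibration $f\colon X\to C$ is by definition a smooth proper holomorphic submersion (its slope is $12$, which by the Noether equality forces $e_f=0$, i.e.\ all fibers are smooth), Ehresmann's theorem shows that $f$ is a $C^\infty$ locally trivial fiber bundle whose fiber $F$ is a closed orientable surface of genus $g\geq 3$.

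The next observation is that the base satisfies $g(C)\geq 1$. This is a standard consequence of the rigidity of smooth families of curves of genus $\geq 2$: any smooth family over $\mathbb{P}^1$ (or over an elliptic curve) is isotrivial and hence becomes a product after a finite \'etale base change, which in turn forces $\chi_f=0$ by the base change property of the modular invariants together with property $(1)$ of the relative invariants; but a Kodaira fibration has slope $12$ with $\chi_f>0$, a contradiction.

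Since $g(C)\geq 1$, the base $C$ is a $K(\pi,1)$ space, so $\pi_2(C)=0$. The long exact sequence of homotopy groups of the fiber bundle $F\hookrightarrow X\stackrel{f}{\lra} C$ then reduces to
$$0=\pi_2(C)\lra \pi_1(F)\stackrel{\alpha}{\lra}\pi_1(X)\lra \pi_1(C)\lra 1,$$
which shows that the map $\alpha$ is injective. Hence $\mathcal{V}_f={\rm im}(\alpha)\cong \pi_1(F)$, which is the fundamental group of a closed orientable surface of genus $g\geq 3$ and is therefore non-trivial.

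The only delicate step is the assertion $g(C)\geq 1$, which rests on the rigidity of smooth high-genus curve families over low-genus bases; everything else is a routine application of Ehresmann's theorem and covering space theory.
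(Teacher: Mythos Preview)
Your proof is correct and follows essentially the same route as the paper: both use that a Kodaira fibration is a smooth fiber bundle and invoke the homotopy exact sequence to obtain $\mathcal{V}_f=\mathrm{im}(\alpha)\cong\pi_1(F)\neq\{1\}$. You are simply more explicit than the paper in justifying the injectivity of $\alpha$, supplying the auxiliary step $g(C)\geq 1\Rightarrow\pi_2(C)=0$ where the paper just writes down the short exact sequence directly.
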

\begin{proof}
	In this case, $f$ admits no singular fiber, and hence it is a topological fiber bundle.
	It follows that the induced homomorphism from $\pi_1(F)$ to $\pi_1(X)$ is injective with the following exact sequence:
	$$1 \lra \pi_1(F) \overset{\alpha}{\lra} \pi_1(X) \lra  \pi_1(C) \to 1.$$
	In particular, the vertical fundamental group $\mathcal{V}_f$ is non-trivial.
\end{proof}
The next lemma, due to Xiao \cite[Lemma\,3]{Xi91}, provides a useful way to determine
whether $\mathcal{V}_f=\{1\}$.
\begin{lemma}\label{lem-2-2}
	Let $F_0$ be any fiber of $f\colon X \to C$ and $\alpha_0\colon \pi_1(F_0) \to \pi_1(X)$ the induced homomorphism, 	then $\mathcal{V}_f \subseteq \mathrm{Im}(\alpha_0)$. In particular, $\mathcal{V}_f$ is trivial if $f$ admits a simply connected fiber.
\end{lemma}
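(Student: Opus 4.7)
The plan is to localize near $F_0$: place $F_0$ inside a small open neighborhood $U\subset X$ that retracts onto $F_0$, observe that a general fiber $F$ close to $F_0$ is contained in $U$, and then deduce that the image of $\pi_1(F)$ in $\pi_1(X)$ must factor through the image of $\pi_1(F_0)$.

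First I would choose a small analytic disk $D\subset C$ around $p:=f(F_0)$ and set $U=f^{-1}(D)$. The key topological input is the classical fact that, for a proper holomorphic map $f\colon U\to D$ over a disk with $U$ a smooth open surface, $U$ admits a deformation retraction onto the (underlying topological space of the) central fiber $F_0$; one can obtain this via a Whitney stratification of $f|_U$ combined with Thom's isotopy lemma, or, after semistable reduction, from explicit local models at the singular points of $F_0$. Consequently the inclusion $F_0\hookrightarrow U$ induces a surjection $\pi_1(F_0)\twoheadrightarrow\pi_1(U)$, which implies
\begin{equation*}
\mathrm{Im}\!\bigl(\pi_1(F_0)\to\pi_1(X)\bigr)\;=\;\mathrm{Im}\!\bigl(\pi_1(U)\to\pi_1(X)\bigr).
\end{equation*}

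Next I would pick a general fiber $F=f^{-1}(q)$ of $f$ with $q\in D$, so that $F\subset U$. The embedding $F\hookrightarrow X$ factors through $U$, and therefore
\begin{equation*}
\mathcal{V}_f\;=\;\mathrm{Im}(\alpha)\;\subseteq\;\mathrm{Im}\!\bigl(\pi_1(U)\to\pi_1(X)\bigr)\;=\;\mathrm{Im}(\alpha_0),
\end{equation*}
which is the first assertion. The ``in particular'' clause is then immediate, since $\pi_1(F_0)=\{1\}$ forces $\mathrm{Im}(\alpha_0)=\{1\}$ and hence $\mathcal{V}_f=\{1\}$.

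The principal obstacle is justifying the retraction $U\simeq F_0$ when $F_0$ is a singular, reducible, or non-reduced fiber. One can pass to the reduction $(F_0)_{\mathrm{red}}$ for free, since $\pi_1$ only sees the underlying topology. For the remaining geometric content, a degeneration of curves in a smooth surface admits standard local models around each singular point of $F_0$ (e.g.\ a local analytic model $xy=t^n$ at a node) on which the retraction is transparent, and these local retractions can be glued along a Whitney stratification to produce the required global retraction of $U$ onto $F_0$.
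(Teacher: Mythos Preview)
Your argument is correct and is the standard one: the tubular neighborhood $U=f^{-1}(D)$ of a fiber in a proper holomorphic map from a smooth surface to a disk deformation retracts onto the central fiber, so a nearby smooth fiber $F\subset U$ has $\mathrm{Im}\bigl(\pi_1(F)\to\pi_1(X)\bigr)\subseteq\mathrm{Im}\bigl(\pi_1(U)\to\pi_1(X)\bigr)=\mathrm{Im}(\alpha_0)$. The retraction of $U$ onto $(F_0)_{\mathrm{red}}$ is indeed the only nontrivial input; your sketch via local models and Thom's isotopy lemma is adequate, and an alternative reference is Clemens' construction of a retraction by integrating a controlled vector field.

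Note, however, that the paper does not supply its own proof of this lemma: it is simply quoted from Xiao \cite[Lemma~3]{Xi91}. So there is no ``paper's proof'' to compare against beyond the citation. Your write-up is essentially what one finds in Xiao's original argument, and would serve perfectly well as a self-contained justification here.
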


\begin{remark}\label{remark}
	It is shown in \cite{LL} that for each $g\geq2$ and each rational number $r\in \big(4(g-1)/g,\lambda_M^h(g)\big)$,
	there exists a hyperelliptic fibration $f_{g,r}$ of genus $g$ with $\lambda_{f_{g,r}}=r$. By the construction of $f_{g,r}$ in the proof of \cite[Theorem 1.3]{LL}, we know that $f_{g,r}$ has at least one singular fiber $F^h_g$, which is simply connected by Example \ref{exa:FhFg}.
	Hence the vertical fundamental group of $f_{g,r}$ is trivial by Lemma \ref{lem-2-2}.
\end{remark}

\section{The construction}
In this section, we mainly aim to construct surface fibrations with trivial vertical fundamental groups whose slopes tend to $12$.
 \begin{proof}[Proof of Theorem \ref{thm1}]
 	We use the notations and terminology introduced in Section \ref{sec-pre}.
 	Let $L_1,L_2,\ldots,L_{3g-4}$ be  very ample divisors on the Satake compactification
 	$\overline{\cala_{g,[l]}}=\overline{\cala_{g,[l]}}^{S}$,
 	where the level $l\geq 3$.
 	Since $\dim \calm_g=3g-3$ and the boundary $j(\overline{\calm_g}) \setminus j(\calm_g)$ is of codimension at least two,
 	the intersection $L_1L_2\cdots L_{3g-4}$ is a complete smooth curve  $C \subseteq j(\CM_g)$.
 	Moreover, by the very ampleness of these $L_i$'s, we may assume that the curve $C$
 	intersects $j(\calh_{g,[l]})$ transversely with $C \cap j(\calh_{g,[l]}) \not=\emptyset$.
 	 	
 	Let $B=j^{-1}(C)\subseteq \calm_{g,[l]}$ be the inverse image of $C$.
 	Note that the Torelli morphism
 	$$j\colon \CM_{g,[l]}\lra\CA_{g,[l]}$$
 	is two-to-one and ramified exactly over the hyperelliptic locus $\calh_{g,[l]}$.
 	As $C$
 	intersects $j(\calh_{g,[l]})$ transversely with $C \cap j(\calh_{g,[l]}) \not=\emptyset$,
 	it follows that
 	$B$ is smooth and irreducible with $B \cap \calh_{g,[l]} \not=\emptyset$.
 	Let $h\colon X \to B$ be the  family of smooth curves induced from the universal family $\mathfrak{f}\colon  \mathcal{S}_{g,[l]} \to \calm_{g,[l]}$.
 	According to Lemma \ref{existprop}, there is an involution $\sigma$ (resp. $\tau$)
 	on $X$ (resp. $B$)
 	such that the following diagram commutes.
 	$$\xymatrix{
 		X \ar[d]_{h}\ar[rr]^{\sigma} && X \ar[d]^{h}\\
 		B \ar[rr]^{\tau}  && B
 	}
 	$$
 	Moreover, the quotient $B/\langle\tau\rangle \cong C$,
 	and $\sigma|_{\Gamma_p}\colon \Gamma_p \to \Gamma_p$ is the hyperelliptic involution of $\Gamma_p$ for any fixed point $p\in \mathrm{Fix}(\tau)\subseteq B$,
 	where $\Gamma_p=h^{-1}(p)$ is the fiber over $p$.
 	
 	Let $f_0\colon X_0 \to C\cong B/\langle\tau\rangle$ be the relatively minimal model of
 	the quotient fibred surface $X/\langle\sigma\rangle \to C \cong B/\langle\tau\rangle$.
 	By our construction, the possible singular fibers of $f_0$
 	are those corresponding to $\Gamma_p$ with $p\in \mathrm{Fix}(\tau)\subseteq B$.
 	As the restricted map
 	$$\sigma|_{\Gamma_p}\colon \Gamma_p \to \Gamma_p$$
 	is the hyperelliptic involution of $\Gamma_p$,
 	its image $\Pi(\Gamma_p)$ is a rational curve with $2g+2$ singularities of $X/\langle\sigma\rangle$
 	on $\Pi(\Gamma_p)$, where $\Pi\colon X \to X/\langle\sigma\rangle$ is the quotient map.
 	By resolving the singularities, it follows that the corresponding fiber $F_{j(p)}=f_0^{-1}\big(j(p)\big)$ in $X_0$
 	is isomorphic to $F_g^h$, which is described in Example \ref{exa:FhFg} for any $p\in \mathrm{Fix}(\tau)$.
 	Let $F_1,\cdots,F_s$ be all the singular fibers of $f_0$,
 	then by \eqref{modinv} and \eqref{eqnchern} we have that
 	$$\begin{aligned}
 	K_{f_0}^2&\,=\kappa(f_0)+\sum_{i=1}^{s} c_1^2(F_i)=\kappa(f_0)+(2g-2)s,\\
 	\chi_{f_0}&\,=\lambda(f_0)+\sum_{i=1}^{s}\chi_{F_i}=\lambda(f_0)+\frac12gs.
 	\end{aligned}$$
 	Note that $h$ is a Kodaira fibration.
 	By \eqref{eqss} and \eqref{timesd},
 	$$\kappa(f_0)=\frac12\kappa(h)=\frac12K_h^2,
 	\quad\text{and}\quad \lambda(f_0)=\frac12\lambda(h)=\frac12\chi_h.$$
 	Thus
 	$$\frac{\kappa(f_0)}{\lambda(f_0)}=\lambda_h=12.$$

Let $p_1=f_0(F_1),\cdots,p_s=f_0(F_s)$.
Then $\{p_1,\cdots,p_s\}$ are just the ramification divisor of the induced double cover
$$j|_{B}\colon  B \lra C.$$
In particular, $s$ is even and $s\geq 2$ since we have assume that $s\neq 0$.
Let $$\pi_n\colon  C_n\lra C$$
be a  cyclic cover of degree $2n+1$ branched exactly over $\{p_1,\cdots,p_s\}$ with ramification indices being all equal to $2n+1$.
Such a cyclic cover exists.
Indeed, take positive integers $0< m_i <2n+1$, $1\leq i \leq s$, such that
\begin{enumerate}[(i).]
	\item $\gcd(m_i,2n+1)=1$ for any $1\leq i \leq s$;
	\item $\sum\limits_{i=1}^{s}m_i$ is a multiple of $2n+1$.
\end{enumerate}
Then the divisor $R:=\sum\limits_{i=1}^{s}m_ip_i$ is $(2n+1)$-divisible, i.e., there exists
a line bundle $L$ such that $\mathcal{O}(R) \sim L^{\otimes (2n+1)}$, where `$\sim$' stands for the linear equivalence.
Then the relation $\mathcal{O}(R) \sim L^{\otimes (2n+1)}$ defines a cyclic cover $\pi_n\colon  C_n\lra C$
satisfying our requirements.
Using such a cyclic cover $\pi_n$ to do the base change,
let $f_n\colon  X_n\to C_n$ be the pullback fibration of $f_0$,
and $F_{i,n} \subseteq X_n$ be the corresponding fiber of $F_i$ for $1\leq i \leq s$.
Then  by Example \ref{exa:FhFg},
$$F_{i,n} \cong F_i \cong F_g^h,\qquad \forall~1\leq i \leq s.$$
Hence by \eqref{timesd}, \eqref{modinv} and \eqref{eqnchern},
$$\begin{aligned}
K_{f_n}^2&\,=\kappa(f_n)+\sum_{i=1}^{s} c_1^2(F_{i,n})=(2n+1)\kappa(f_0)+(2g-2)s,\\
\chi_{f_n}&\,=\lambda(f_n)+\sum_{i=1}^{s}\chi_{F_{i,n}}=(2n+1)\lambda(f_0)+\frac12gs.
\end{aligned}$$
Thus the slope of $f_n$ is
$$\lambda_{f_n}=\frac{K_{f_n}^2}{\chi_{f_n}}=\frac{(2n+1)\kappa(f_0)+(2g-2)s}{(2n+1)\lambda(f_0)+\frac12gs}.
$$
When $n$ goes to infinity,  the slope of $f_n$ tends to $12$ as required.
Moreover, the vertical fundamental group $\mathcal{V}_{f_n}$ is trivial by Lemma \ref{lem-2-2},
since $f_n$ admits a singular fiber isomorphic to $F^h_g$, which is simply connected by Example \ref{exa:FhFg}.
\end{proof}

Applying the above construction to the case $g=3$, we can prove Theorem \ref{thm2}.

\begin{proof}[Proof of Theorem \ref{thm2}]
	We follow the notations introduced in the proof of Theorem \ref{thm1}.
	Take $g=3$, and let $h\colon X \to B$ be the Kodaira fibration of genus $g=3$ constructed in the proof of Theorem \ref{thm1}.
	Let $f_0\colon X_0 \to C\cong B/\langle\tau\rangle$ be the relatively minimal model of
	the quotient fibred surface $X/\langle\sigma\rangle \to C \cong B/\langle\tau\rangle$.
	Then it is known that
	$$\begin{aligned}
		K_{f_0}^2&\,=\kappa(f_0)+\sum_{i=1}^{s} c_1^2(F_i)=\kappa(f_0)+4s,\\
		\chi_{f_0}&\,=\lambda(f_0)+\sum_{i=1}^{s}\chi_{F_i}=\lambda(f_0)+\frac32s,
	\end{aligned}$$
	and that
	$$\kappa(f_0)=\frac12\kappa(h)=\frac12K_h^2,
	\qquad \lambda(f_0)=\frac12\lambda(h)=\frac12\chi_h,
	\qquad \text{with~}~\frac{K_h^2}{\chi_h}=12.$$
	Here $s$ is the number of singular fibers of $f_0$.
	Let $\delta_i~(i=0,1)$ be the divisor class of $\Delta_i$ of $\overline{\CM_3}\backslash{\CM}_3$, and $\mathfrak{h}$ be the divisor class of $\overline{\CH_3}^{DM}$.  By the formulas (see \cite{HM82})
	$$\kappa=\frac13\delta_0+3\delta_1+\frac43\mathfrak{h},~~\lambda=\frac19\delta_0+\frac13\delta_1+\frac19\mathfrak{h},$$
	it follows that
	$$K_h^2=\kappa(h)=\frac{4}{3}s,\qquad \chi_h=\lambda(h)=\frac{1}{9}s.$$
	Hence
	\begin{equation}\label{eqn-3-1}
		\kappa(f_0)=\frac{2}{3}s,\qquad \lambda(f_0)=\frac{1}{18}s,
	\end{equation}
	and
	\begin{equation}\label{eqn-3-2}
		K_{f_0}^2=\kappa(f_0)+4s=\frac{14}{3}s,\qquad \chi_{f_0}=\lambda(f_0)+\frac32s=\frac{14}{9}s.
	\end{equation}
	We claim that
	\begin{claim}\label{claim1}
		For any rational number $r\in \big[3,12)$, there exists a base change $\pi\colon \tilde{C} \to C$, such that
		\begin{enumerate}[(i).]
			\item the slope of $\tilde f$ is $\lambda_{\tilde f}=r$,
			where $\tilde f\colon \tilde X \to \tilde C$ is the pullback fibration under the base change $\pi$;
			\item the fibration $\tilde f$ admits singular fibers, and every singular fiber is isomorphic to $F^h_3$, where $F^h_3$ is the singular hyperelliptic fiber of genus $3$ as in Example \ref{exa:FhFg}.
		\end{enumerate}		
	\end{claim}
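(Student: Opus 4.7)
The plan is to realize every rational $r \in [3, 12)$ by a suitable base change $\pi \colon \tilde C \to C$ of the fibration $f_0$. By Example \ref{exa:FhFg}, the $e$-th root model of $F_3^h$ equals $F_3^h$ whenever $e$ is odd, and preimages of non-critical values of $f_0$ remain smooth. So if $\pi$ has degree $d$ and every preimage of $\{p_1,\ldots,p_s\}$ carries an odd ramification index, condition (ii) will hold automatically. Writing $T$ for the total number of such preimages, the base-change property \eqref{timesd}, combined with \eqref{modinv}, \eqref{eqnchern} and \eqref{eqn-3-1}, yields
\[
K_{\tilde f}^2 = \tfrac{2}{3}ds + 4T, \qquad \chi_{\tilde f} = \tfrac{1}{18}ds + \tfrac{3}{2}T, \qquad \lambda_{\tilde f} = \frac{12ds+72T}{ds+27T}.
\]

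Setting $\lambda_{\tilde f}=r$ and solving gives $T/(ds) = (12-r)/(27r-72)$, which lies in $(0,1]$ for $r \in [3,12)$. I would write this ratio in lowest terms as $a/b$ with $0 < a \le b$, take $d = 2b$, and impose at each $p_i$ the local ramification profile consisting of one preimage of index $2(b-a)+1$ together with $2a-1$ unramified preimages. Then $t_i = 2a$ for every $i$ and $T = 2as$; all ramification indices involved are odd, and substituting $T/(ds) = a/b$ into the slope formula gives $\lambda_{\tilde f}=r$. Thus both (i) and (ii) will be verified once such a $\pi$ is known to exist, with singular fibers present since $T=2as>0$.

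The existence of $\pi$ is the one delicate point. By Riemann's existence theorem, it suffices to exhibit permutations $\sigma_i \in S_{2b}$ of the prescribed cycle types together with auxiliary monodromy data whose product is the identity and which generate a transitive subgroup. Each $\sigma_i$ is a product of an odd-length cycle and fixed points, hence an even permutation, so $\prod_i \sigma_i \in A_{2b}$. One can therefore append one further branch point $q \in C \setminus \{p_1,\dots,p_s\}$ carrying the monodromy $\big(\prod_i\sigma_i\big)^{-1}$ and, if needed, further auxiliary branch points on $C \setminus \{p_1,\dots,p_s\}$ to force transitivity and hence connectedness of $\tilde C$. Any such auxiliary ramification lies over smooth fibers of $f_0$, so its preimages remain smooth and affect neither $T$ nor the singular-fiber classification. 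The main obstacle is precisely this existence-and-connectedness step, but the freedom to insert auxiliary branch points outside $\{p_1,\dots,p_s\}$ at no cost to the invariants makes it tractable; once $\pi$ is produced, (i) follows by direct substitution and (ii) by Example \ref{exa:FhFg}.
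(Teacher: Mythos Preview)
Your argument is correct and arrives at the same conclusion, but by a genuinely different mechanism than the paper.

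The paper first performs a preliminary base change of $f_0$, unramified over the critical locus, to multiply $s$ so that a certain fraction $ds$ of the critical points becomes an integer; it then takes a \emph{cyclic} cover of odd degree $m$ totally ramified over exactly those $ds$ points (and unramified over the remaining $s-ds$ critical points), which makes the existence question a simple divisibility check for the line bundle defining the cyclic cover. You instead fix $d=2b$ once and for all and prescribe a uniform mixed ramification profile $(2(b-a)+1,1^{2a-1})$ at every critical point, invoking Riemann's existence theorem and absorbing the product-one and transitivity constraints into auxiliary branch points over smooth fibers. What the paper's route buys is that cyclic covers are completely explicit and their existence needs no appeal to monodromy or connectedness arguments; what your route buys is that no preliminary adjustment of $s$ is required and the same local picture works at every $p_i$. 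Your handling of the delicate step is sound: since all the $\sigma_i$ are odd cycles and hence even permutations, one auxiliary branch point carrying $(\prod\sigma_i)^{-1}$ closes the product relation, and a further pair $\tau,\tau^{-1}$ with $\tau$ a $d$-cycle forces transitivity, all over smooth fibers where any ramification index is harmless.
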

	We first prove Theorem \ref{thm2} based on the above claim.
	By the above claim, for any given rational number $r\in \big[3,12)$,
	one can construct a non-hyperelliptic fibration $f_r$ of genus $g=3$,
	such that the slope $\lambda_{f_r}=r$ and the vertical fundamental group is trivial by Lemma \ref{lem-2-2}.
	Combining this with Remark \ref{remark}, we prove Theorem \ref{thm2}.
	It remains to prove the above claim.
	
	{\it \noindent Proof of Claim \ref{claim1}.}
	The main idea is more or less the same as that in the proof of Theorem \ref{thm1}.
	But we have to carefully choose the branched points of the base change
	instead of what we do in the proof of Theorem \ref{thm1}, where we simply take a base change totally ramified over the images of all singular fibers of $f_0$.
	
	First, if $r=3$, then the fibration $f_0$ already satisfies our requirements by \eqref{eqn-3-2}.
	Thus we will assume in the following that $3<r <12$.
	Let $m$ be any odd integer satisfying $m>\frac{27r-72}{12-r}$,
	and let 	
	$$d=\frac{28m(r-3)}{9(3r-8)(m-1)}.$$
	By replacing $f_0$ by a base change unbranched over $\{p_1,\cdots,p_s\}$, we may assume that $ds$ is also an integer at least $2$,
	where $p_i=f_0(F_i)$ is the image of the singular fiber $F_i$.
	Indeed, let $\pi_k:C_k \to C$ be any base change of degree $k$, whose branch locus does not contain any $p_i$, then the number of singular fibers as well as the invariants $K_{f_k}^2$ and $\chi_{f_k}$ are multiplied by $k$.
	Note that $0<d<1$ by definition.
	Let $$\pi\colon \tilde C \to C$$ be a cyclic cover of degree $m$ branched exactly over $\{p_1,\cdots, p_{ds}\}$ with ramification indices being all equal to $m$.
	Such a cyclic cover exists as already showed in the proof of Theorem \ref{thm1}.
	Let $\tilde f$ be the pullback fibration.
	Then the number of singular fibers of $\tilde f$ is $ds+m(s-ds)$ and each singular fiber is isomorphic to $F^h_3$ since $m$ is assumed to be odd.
	Based on \eqref{eqn-3-1} and the formulas in Section\,\ref{sec-modular}, one computes the relative invariants of $\tilde f$ as follows
	$$\begin{aligned}
		K_{\tilde f}^2&\,=m\kappa(f_0)+4\big(ds+m(s-ds)\big)=\Big(\frac{14}{3}m+4d(1-m)\Big)s,\\[2pt]
		\chi_{\tilde f}&\,=m\lambda(f_0)+\frac32\big(ds+m(s-ds)\big)=\Big(\frac{14}{9}m+\frac{3}{2}d(1-m)\Big)s.
	\end{aligned}$$
	So the slope of $\tilde f$ is
	\begin{align*}
		\lambda_{\tilde f}=\frac{K_{\tilde f}^2}{\chi_{\tilde f}}
		=\frac{84m+72(1-m)d}{28m+27(1-m)d}=r.
	\end{align*}
	The last equality follows from the definition of $d$ above.
	Moreover, by construction $\tilde f$ admits singular fibers and every singular fiber is isomorphic to $F^h_3$ as required.
	This proves Claim \ref{claim1}, and hence completes the proof of Theorem \ref{thm2}.
\end{proof}

\begin{remark}\label{rmk-gen}
  Regarding our result in Theorem \ref{thm2} for the case $g=3$,
  we like to conjecture that, for each $g>3$ and each rational number $r\in [4(g-1)/g, 12)$,  there exists a fibration $f$ of genus $g$ such that  $\lambda_f=r$ and $\mathcal{V}_f=\{1\}$.
\end{remark}

%
%

{Xiao-Lei Liu, School of Mathematical Sciences, Dalian University of Technology, Dalian, Liaoning Province, P. R. of China.

 {\it E-mail address}: xlliu1124@dlut.edu.cn}\vspace{3mm}

{Xin Lu, School of Mathematical Sciences,  Key Laboratory of MEA(Ministry of Education) \& Shanghai Key Laboratory of PMMP,  East China Normal University, Shanghai 200241, China

 {\it E-mail address}:
 xlv@math.ecnu.edu.cn}

\clearpage

\end{document}